\documentclass[11pt,a4paper]{article}

\usepackage[T1]{fontenc}
\usepackage{lmodern}
\usepackage{amsmath,amssymb,amsthm,amscd,mathtools}
\usepackage{booktabs}
\usepackage[margin=1.15in]{geometry}
\usepackage{microtype}
\usepackage[colorlinks=true,citecolor=blue,linkcolor=blue,urlcolor=blue]{hyperref}

\newcommand{\PP}{\mathbb P}
\newcommand{\HH}{\mathbb H}
\newcommand{\ZZ}{\mathbb Z}

\newcommand{\Aut}{\operatorname{Aut}}
\newcommand{\Sing}{\operatorname{Sing}}

\newtheorem{theorem}{Theorem}[section]

\newtheorem{remark}[theorem]{Remark}

\title{Explicit Bicanonical Models of Eight Fake Quadrics}
\author{Lev Borisov and Carlos Rito}
\date{}

\begin{document}
\maketitle

\begin{abstract}
We compute explicit defining equations for eight fake quadrics arising as
$\mathbb Z/2\times\mathbb Z/4$-covers of two singular
$\mathbb Z/2$-Godeaux surfaces obtained in earlier work of the second
author. Starting from explicit equations for the universal covers of
the Godeaux surfaces, we reconstruct the relevant character eigenspaces
and determine the homogeneous ideals of the bicanonical models of the
eight fake quadrics in $\mathbb P^8$. All eight models are defined over
$\mathbb Q$. We prove that the surfaces are pairwise non-isomorphic and
rigid. Combined with the non-product result established in the earlier
work, this gives the first explicit projective models of fake quadrics
which are not isogenous to a product of curves.
\end{abstract}

\section{Introduction}\label{sec:introduction}

A \emph{fake quadric} is a smooth minimal complex surface $S$ of general
type whose numerical invariants agree with those of a smooth quadric in $\mathbb P^3$,
namely $K_S^2=8$ and $p_g(S)=q(S)=0$.
Noether's formula then gives $c_2(S)=4$, so that $S$ has the same Hodge diamond as $\PP^1\times\PP^1$.
All known fake quadrics are uniformized by the bidisk $\HH\times\HH$,
with the exception of those arising from the construction in \cite{RitoExplicitFQ}
for which bidisk uniformization is unknown.
The existence of a fake quadric with a different universal cover remains an open problem
\cite[Problem~5]{CasciniCataneseChenKeum25}.

Bidisk-uniformized fake quadrics fall into two classes, according to whether
the corresponding lattice is reducible or irreducible
\cite{DzambicRoulleau14}.  In the reducible case the surface is isogenous to a
product of curves.  These surfaces may vary in moduli and were classified by
Bauer, Catanese and Grunewald \cite{BauerCataneseGrunewald08}.  In the
irreducible case the lattice is arithmetic and arises from a quaternion
algebra over a totally real number field. The resulting surfaces are
therefore usually called \emph{quaternionic fake quadrics}.
Linowitz, Stover and Voight determined a list of commensurability classes in
which such surfaces may occur \cite{LinowitzStoverVoight19}. 

Quaternionic fake quadrics are, in a sense, analogous to fake projective
planes. Both are rigid surfaces described by arithmetic data, but passing
from such an arithmetic description to explicit projective equations is very
difficult. Indeed, the first explicit equations for a fake projective plane
were obtained by the first author and Keum \cite{BorisovKeum20}, about ten
years after the arithmetic classification of fake projective planes had been
completed.

In this paper we give the first construction by equations of fake quadrics which are
not isogenous to a product of curves.

Our starting point is a construction by the second author given in the paper
\cite{RitoExplicitFQ}, inspired by a result of D\v{z}ambi\'c and Roulleau on
automorphisms of quaternionic fake quadrics \cite{DzambicRoulleau14}: if such
a surface admits an action of $G\cong\ZZ/2\times\ZZ/4$, then its quotient is a
numerical Godeaux surface with two $\mathsf A_1$ and two $\mathsf A_3$ singularities.
In that paper \cite{RitoExplicitFQ}, such a singular
$\ZZ/2$-Godeaux surface $X$ was constructed, and the
divisibility relations needed for the converse construction were established.
The computations in fact yielded two solutions,
corresponding to two non-isomorphic Godeaux surfaces with the same singular
set. Only one of the solutions was pursued there, but here we consider both.
Pardini's theory then yields smooth $G$-coverings $S\to X$, where $S$ is
a fake quadric.  The classification of product-quotient Godeaux surfaces,
together with the lifting theorem proved by Gleissner and Ruhland in the
appendix to \cite{RitoExplicitFQ}, shows that $S$ is not isogenous to a
product.  This approach proves the existence of $S$, but does not provide
its projective model.

Here we make the covering surfaces explicit as projective varieties.  A careful
analysis of the covering data produces exactly eight fake quadrics
$S_1,\ldots,S_8$.  For each $i$, we compute the homogeneous ideal of the
bicanonical model of $S_i$ in $\PP^8$.  These equations make the geometry of
the surfaces accessible and allow us to prove that all eight are rigid.
Moreover, the non-product argument of \cite{RitoExplicitFQ} applies to every
$S_i$, since each carries a $G$-action whose quotient is one of the two
Godeaux surfaces.  Thus the $S_i$ give the first explicit projective models of
fake quadrics which are not isogenous to a product of curves.
The question of whether these surfaces are uniformized by the bidisk remains open.

The paper is organized as follows. In Section \ref{sec:godeaux-construction} we recall
the two singular Godeaux surfaces, the abelian-cover construction, and the argument 
showing that the resulting fake quadrics are not isogenous to a product. 
In Subsection \ref{subsec:explicit-equations} we we describe how to compute the 
bicanonical equations of the covering surfaces. 
In Subsection \ref{subsec:verification} we prove that the 
resulting projective models are indeed bicanonically embedded fake quadrics. 
In Subsection \ref{subsec:rigidity} we prove their rigidity,
and in Subsection \ref{subsec:distinguishing-surfaces} we show that the eight surfaces
are not pairwise isomorphic.

\subsubsection*{Notation}

As usual, the holomorphic Euler characteristic of a surface \(S\) is
denoted by \(\chi(\mathcal O_S)\), the geometric genus by \(p_g(S)\),
the irregularity by \(q(S)\), and a canonical divisor by \(K_S\).
A $(-m)$-curve is a curve isomorphic to $\mathbb P^1$ with self-intersection $-m$.
Linear equivalence of divisors is denoted by $\equiv$.

\subsubsection*{Acknowledgments}

The second author was financed by Portuguese Funds through FCT
(Funda\c{c}\~ao para a Ci\^encia e a Tecnologia) within the Project
UID/00013/2025: Centro de Matem\'atica da Universidade do Minho (CMAT/UM).

\section{The Godeaux surface and the abelian-cover construction}
\label{sec:godeaux-construction}

In this section we recall the ingredients of \cite{RitoExplicitFQ} that will
be used throughout the paper.

\subsection{The Godeaux surface and its covering}\label{subsec:singular-godeaux}

The construction starts with a normal $\ZZ/2$-Godeaux surface $X$.  It has
only rational double points and
\[
 K_X^2=1,\qquad p_g(X)=q(X)=0,\qquad
 \Sing(X)=2\mathsf A_1+2\mathsf A_3.
\]
It was found inside the explicit seven-dimensional family $\mathcal M_2^1$ of
$\ZZ/2$-Godeaux surfaces given in \cite{DiasRito}.  Finite-field
experiments first revealed an unexpected four-dimensional locus of surfaces
with four nodes.  Further interpolation produced a two-dimensional family
with six nodes and, inside it, a one-dimensional locus with singularities
$4\mathsf A_1+\mathsf A_3$.  Special points on the latter locus have singular set
$2\mathsf A_1+2\mathsf A_3$.  The defining data were then lifted to characteristic zero.

Let
\[
                         \rho\colon X'\longrightarrow X
\]
be the minimal resolution.
We label the exceptional $(-2)$-curves $N_i$ so that $N_1$
and $N_2$ lie over the two nodes, whereas
$N_3\!\mathbin{-}\!N_4\!\mathbin{-}\!N_5$ and $N_6\!\mathbin{-}\!N_7\!\mathbin{-}\!N_8$
are the two \(\mathsf A_3\)-chains.

Two irreducible smooth curves $C,D\subset X$ were computed in
\cite{RitoExplicitFQ}.  Let $C',D'\subset X'$ be their strict transforms.
They satisfy
\begin{align}
  8K_{X'} \;&\equiv\;
     4C'+2N_1+N_3+2N_4+3N_5+3N_6+2N_7+N_8,
       \label{eq:four-divisibility}\\
  4K_{X'} \;&\equiv\;
     2D'+N_1+N_2+N_6+2N_7+N_8.
       \label{eq:two-divisibility}
\end{align}
These are the $4$- and $2$-divisibility relations from which the cover is
built.

Set $G=\ZZ/2\times\ZZ/4$.
We briefly describe the reduced building data for a normal $G$-cover in the
form used in \cite{Pardini91}.  The group $G$ has three cyclic subgroups
$H_1,H_2,H_3$ of order $2$ and two cyclic subgroups $H_4,H_5$ of order $4$.
After choosing generators $\psi_j$ of $H_j^*$, the nonzero reduced branch
divisors on $X'$ are
\begin{center}
\begin{tabular}{cc}
\toprule
inertia data & branch divisor\\
\midrule
$(H_1,\psi_1)$       & $N_4+N_7$\\
$(H_2,\psi_2)$       & $N_1$\\
$(H_3,\psi_3)$       & $N_2$\\
$(H_4,\psi_4)$       & $N_3$\\
$(H_4,\psi_4^{-1})$  & $N_5$\\
$(H_5,\psi_5)$       & $N_6$\\
$(H_5,\psi_5^{-1})$  & $N_8$\\
\bottomrule
\end{tabular}
\end{center}
Their total support is a simple normal crossings divisor.

Choose generators $\chi_2,\chi_5$ of $G^*$ of orders $2$ and $4$.  With the
above branch data, Pardini's reduced covering relations are
\begin{align}
  2L_2 &\equiv N_1+N_2+N_6+N_8,
       \label{eq:pardini-two}\\
  4L_5 &\equiv
       2N_1+N_3+2N_4+3N_5+3N_6+2N_7+N_8.
       \label{eq:pardini-four}
\end{align}
Equations \eqref{eq:four-divisibility} and
\eqref{eq:two-divisibility} give the required line bundles explicitly:
\begin{equation}
        L_2\equiv 2K_{X'}-D'-N_7,
        \qquad
        L_5\equiv 2K_{X'}-C'.
        \label{eq:generating-eigensheaves}
\end{equation}
Thus Pardini's existence theorem produces a smooth $G$-cover
\[
                         \pi'\colon S'\longrightarrow X'.
\]
Let $S$ be the surface obtained by contracting the curves in $S'$ lying over
the exceptional divisor of $\rho$.  It is shown in \cite{RitoExplicitFQ} that
$S$ is a smooth minimal surface of general type with
$K_S^2=8$ and $p_g(S)=q(S)=0$. Thus $S$ is a fake quadric.

\subsection{Why the cover is not of product type}
\label{subsec:not-product}

We recall the argument from \cite{RitoExplicitFQ} that $S$ is not a product-quotient surface.
It will apply without change to each of the eight covers constructed in this
paper.

Suppose, for a contradiction, that \(S\) is a product-quotient surface,
say
\[
        S\cong (C_1\times C_2)/\Gamma,
\]
where \(g(C_1),g(C_2)\geq 2\). Since \(S\) is a fake quadric, the
product-quotient formula \cite[Corollary~1.6]{BauerPignatelli12} forces
the basket correction term to vanish. Hence the action of \(\Gamma\) is
free, and \(S\) is isogenous to a product.

The surface \(S\) carries the action of
\(G=\ZZ/2\times\ZZ/4\) coming from the cover. The lifting theorem of
Gleissner and Ruhland \cite[Appendix]{RitoExplicitFQ} shows that every
automorphism of a variety isogenous to a product lifts to the covering
product. The \(G\)-action therefore lifts to \(C_1\times C_2\), and
there is a finite group
\[
        \widetilde\Gamma<\Aut(C_1\times C_2)
\]
such that
\[
        X=S/G\cong(C_1\times C_2)/\widetilde\Gamma.
\]
Thus \(X\) is a product-quotient Godeaux surface. If the action of
\(\widetilde\Gamma\) is unmixed, the classification in
\cite{BauerPignatelli12} excludes the basket
\(2\mathsf A_1+2\mathsf A_3\). If it is mixed, the classification in
\cite{FrapportiPignatelli15} permits this basket only when the Godeaux
surface has fundamental group \(\ZZ/4\). Since
\(\pi_1(X)\cong\ZZ/2\), this is a contradiction. Thus \(S\) is not
isogenous to a product of curves.

\section{Explicit projective models}
\label{sec:explicit-models}

The construction in Section~\ref{sec:godeaux-construction} recalled the
existence of the covering surfaces and the non-product argument.
We now compute their bicanonical models, distinguish the resulting
fake quadrics, and verify their rigidity.

\subsection{Finding explicit equations of the covering surfaces}
\label{subsec:explicit-equations}

\noindent{\bf Step 1.}\newline
Let \(Y\longrightarrow X\) be the universal covering of one of the two singular $\mathbb Z/2$-Godeaux surfaces $X$,
with covering involution \(\iota\), and let \(Z\longrightarrow Y\) be the \(G\)-cover
obtained by pulling back the construction of
Subsection~\ref{subsec:singular-godeaux}, where
\[
 G=\langle a\rangle\times\langle b\rangle
   \cong \mathbb Z/2\times\mathbb Z/4,
 \qquad |a|=2,\quad |b|=4.
\]
Assume there is an involution \(\tau\) of \(Z\) lifting \(\iota\) and commuting
with \(G\).  We call a section \emph{even} or \emph{odd} according as its
\(\tau\)-eigenvalue is \(+1\) or \(-1\).

We retain the generators \(\chi _2,\chi _5\) of \(G^*\) introduced in
Subsection~\ref{subsec:singular-godeaux}, of orders $2$ and $4$,
respectively.

Set
\[
 H=\langle b^2\rangle,\qquad
 \overline G=G/H\cong(\mathbb Z/2)^2,\qquad
 Y'=Z/H.
\]
We thus have a tower
\[
        Z\xrightarrow{\ r\ }Y'
        \xrightarrow{\ q\ }Y
        \longrightarrow X.
\]
All maps in this tower are unramified in codimension one, so canonical
divisors pull back along them.  The characters of \(G\) which are trivial
on \(H\) are precisely those whose order divides $2$, hence
\[
 \overline G^*=G^*[2]
   =\{1,\chi _2,\chi _5^2,\chi _2\chi _5^2\}.
\]

Fix a non-trivial character \(\chi\in\overline G^*\).  Let
\[
 u_{\chi,+},u_{\chi,-}\in H^0(Y',2K_{Y'})
\]
be, respectively, even and odd \(\chi\)-eigensections.  Since
\(\chi^2=1\), their quadratic products are \(\overline G\)-invariant and
therefore descend uniquely to sections
\(F_{\chi,1},F_{\chi,2},F_{\chi,3}\in H^0(Y,4K_Y)\), characterized by
\[
 q^*F_{\chi,1}=u_{\chi,+}^2,\qquad
 q^*F_{\chi,2}=u_{\chi,-}^2,\qquad
 q^*F_{\chi,3}=u_{\chi,+}u_{\chi,-}.
\]
The first two sections are even under \(\iota\), the third is odd, and
\begin{equation}
        F_{\chi,1}F_{\chi,2}=F_{\chi,3}^2.
        \label{eq:rank-one-relation}
\end{equation}

We write \(F_{\chi,1},F_{\chi,2}\) in a basis of the even part of
\(H^0(Y,4K_Y)\), and \(F_{\chi,3}\) in a basis of its odd part, obtaining a
system of polynomial equations in their coefficients.
The Mathematica \cite{Mathematica} computation gives three solutions,
corresponding to the three non-trivial order $2$ characters
$\chi _2, \chi _5^2, \chi _2\chi _5^2$.

Each solution determines an even and an odd bicanonical eigensection on
\(Y'\).  We therefore obtain $6$ new sections of \(H^0(Y',2K_{Y'})\),
one even and one odd for each non-trivial character of \(\overline G\).
Together with the pullback of the $4$-dimensional space
\(H^0(Y,2K_Y)\), whose even and odd parts both have dimension $2$, these
$10$ sections form a basis of
\(H^0(Y',2K_{Y'})\).

To compute the relations among the sections in \(H^0(Y',2K_{Y'})\), we proceed as follows.
For a generic choice of a closed point \(y\) of the cone over \(Y\), the three triples
\((F_{\chi,1},F_{\chi,2},F_{\chi,3})\) determine each pair
\((u_{\chi,+},u_{\chi,-})\) up to a common sign, and hence give
\(2^3=8\) choices for the six new coordinates.  Replacing \(y\) by
\(-y\) and simultaneously changing the signs of all six new coordinates
does not change the corresponding point of \(\mathbb P^9\).  The eight
choices therefore give four points of \(Y'\) over a general point of \(Y\).
Repeating this construction for sufficiently many points gives a large
set of points \(P_1,\ldots,P_N\) on the bicanonical model
\(Y'\subset\mathbb P^9\). For each degree \(d\), a general homogeneous
polynomial of degree \(d\) in the ten coordinates is evaluated at these
points. Requiring it to vanish at every \(P_i\) gives a linear system in
its coefficients. The kernel of the corresponding evaluation matrix is
the space of degree-\(d\) relations. Thus, once sufficiently many points
of \(Y'\) have been computed, finding its defining relations reduces to linear algebra.

We note that one of the sections \(F_{\chi,i}\in H^0(Y,4K_Y)\) has divisor \(2D\), where
$D$ is the curve described in Subsection \ref{subsec:singular-godeaux}.\\

\noindent{\bf Step 2.}\newline
It remains to recover the double cover \(r\colon Z\to Y'\).  There is a
unique non-trivial character of \(\overline G\) which is the square of an
order $4$ character of \(G\), namely
\[
        \tilde\chi=\chi _5^2.
\]
Let \(s\in H^0(Y',2K_{Y'})_{\tilde\chi}\) be one of the two (even or odd) bicanonical
sections of this character.  If \(\psi\in G^*\) has order $4$ and
\(\psi^2=\tilde\chi\), let
\[
        v_{\psi,+},v_{\psi,-}\in H^0(Z,2K_Z)
\]
be even and odd \(\psi\)-eigensections.  Their squares and product are
invariant under \(H\), and hence descend to sections
\[
        g_1,g_2,g_3\in H^0(Y',4K_{Y'})_{\tilde\chi}
\]
satisfying
\[
 r^*g_1=v_{\psi,+}^2,\qquad
 r^*g_2=v_{\psi,-}^2,\qquad
 r^*g_3=v_{\psi,+}v_{\psi,-}.
\]

Since both \(s\) and \(g_i\) transform according to the order $2$
character \(\tilde\chi\), their product is \(\overline G\)-invariant.
Thus
\[
        f_i=s g_i\in H^0(Y',6K_{Y'})^{\overline G},
        \qquad i=1,2,3.
\]
all vanish on \(s=0\), and they satisfy
\begin{equation}
        f_1f_2=f_3^2.
        \label{eq:second-rank-one-relation}
\end{equation}
If \(s\) is even, then \(f_1,f_2\) are even and \(f_3\) is odd. If
\(s\) is odd, the parities are reversed.

Conversely, a solution of \eqref{eq:second-rank-one-relation} in the
\(\overline G\)-invariant subspace, with each \(f_i\) divisible by \(s\),
gives sections \(g_i=f_i/s\) satisfying \(g_1g_2=g_3^2\).
Among the six sections obtained in the first step,
exactly one---necessarily one of the two
\(\tilde\chi\)-eigensections---produces solutions.
The Mathematica \cite{Mathematica} computation yields
eight additional bicanonical sections, one even and one odd in each of the
four order $4$ character spaces
\[
 \chi _5,\qquad \chi _5^{-1},\qquad
 \chi _2\chi _5,\qquad \chi _2\chi _5^{-1}.
\]
Together with the $10$ pullbacks from \(Y'\), they form a basis of
\[
        H^0(Z,2K_Z),\quad \text{with}\quad h^0(Z,2K_Z)=18.
\]

The square roots of these new sections cannot be chosen independently.
We compute several triple products of the sections which are pullbacks of known
sections on \(Y\).  Evaluating these identities at a point relates the
signs of the corresponding square roots and determines the compatible
lifts to \(Z\).  Repeating this construction at sufficiently many points
and interpolating the resulting values gives the homogeneous relations
among the eighteen bicanonical coordinates of \(Z\).

We note that one of those $8$ additional bicanonical sections is the pullback of
the curve $C$ described in Subsection \ref{subsec:singular-godeaux}.\\

\noindent{\bf Step 3.}\newline
For each $g\in\{1,a,b^2,ab^2\}$, set
\[
S_g:=Z/\langle g\tau\rangle.
\]
Since $g\tau$ commutes with $G$ and induces $\iota$ on $Y$, the group
$G$ acts on $S_g$, and
\[
S_g/G\cong X.
\]
Thus the four involutive lifts of $\iota$ give four fake quadrics covering the same
Godeaux surface $X$.
Applying the construction to the two Godeaux surfaces produces eight fake quadrics in total.

In our examples, the surfaces \(X\) and \(Y\) are defined over \(\mathbb Q\).
The solutions of the first step can be scaled so that \(Y'\) is also defined over
\(\mathbb Q\), while \(Z\) can be taken over the Gaussian numbers \(\mathbb Q(i)\).
We were able to tweak the basis so that all fake quadrics are defined over the rational numbers
(although this was no longer an eigenbasis of the action).

\subsection{Verification of the projective models}
\label{subsec:verification}

We now verify directly from the equations that the eight projective models obtained above are smooth 
fake quadrics and that the given embeddings are bicanonical. All computations in Subsections \ref{subsec:verification},
\ref{subsec:rigidity}, \ref{subsec:distinguishing-surfaces} were carried out in Magma \cite{BCP}.
The corresponding code and data files are available as ancillary files accompanying this paper.

\begin{theorem}
Each of the eight projective models computed above is a fake quadric embedded by the bicanonical system.
\end{theorem}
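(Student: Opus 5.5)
The plan is to verify everything directly from the equations in $\PP^8$, without relying on the construction that produced them. For each of the eight homogeneous ideals $I\subset\mathbb Q[x_0,\dots,x_8]$ we work with $V=V(I)$ and proceed in order.

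\textbf{Dimension, degree and Hilbert polynomial.} We compute a Gr\"obner basis and the Hilbert polynomial of $\mathbb Q[x]/I$. For a bicanonically embedded surface with $K^2=8$ and $\chi=1$, Riemann--Roch gives
\[
h^0(mK)=\chi+\tfrac{m(m-1)}{2}K^2=1+4m(m-1) \quad\text{for } m\ge 2.
\]
With $H=2K$, this means $P(n)=h^0(2nK)=16n^2-8n+1$ for $n\ge1$. So we check that $V$ is a surface of degree $H^2=32$ whose Hilbert polynomial is $16n^2-8n+1$. This already fixes $\chi(\mathcal O_V)=P(0)=1$ and $H^2=32$, provided $V$ is a reduced irreducible surface. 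Irreducibility can be checked via a primary decomposition over $\mathbb Q$, together with absolute irreducibility, which follows from smoothness plus connectedness.

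\textbf{Smoothness.} This is the main obstacle, because the Jacobian ideal of a codimension-$6$ variety in $\PP^8$ is large. We first try the Jacobian criterion over a finite field $\mathbb F_p$ of good reduction, and then lift the conclusion by semicontinuity. Concretely, if the reduction mod $p$ of the flat family over $\mathrm{Spec}\,\ZZ_{(p)}$ is smooth of the expected dimension with the same Hilbert polynomial, then the generic fibre is smooth.

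If the full Jacobian computation is too heavy, we use the $G$-action instead. Freeness of the relevant involutions, together with the explicit structure over $X$, reduces the check to affine charts. Alternatively, we cover $V$ by the open sets $x_i\ne0$ and check the rank of the Jacobian minors chart by chart.

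\textbf{Canonical class and invariants.} Next we show $\mathcal O_V(1)\cong\mathcal O_V(2K_V)$. We compute a canonical module $\omega_V=\mathrm{Ext}^6(\mathbb Q[x]/I,\mathbb Q[x](-9))$, or equivalently use the adjunction-type computation available in Magma. We then check that $\omega_V^{\otimes 2}\cong\mathcal O_V(1)$ by exhibiting the isomorphism of graded modules in high degree. Given this, $K_V^2=H^2/4=8$ and $K_V$ is ample, so $V$ is minimal of general type.

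With $\chi=1$, Noether's formula gives $c_2=4$. It remains to show $p_g=0$; then $q=0$ follows from $\chi=1$. Here $p_g=h^0(K_V)$, which we compute as the degree-$0$ part of the saturated canonical module; we expect it to be $0$.

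Finally, the embedding is by the complete bicanonical system. The ideal contains no linear forms, so $V$ is nondegenerate and spans $\PP^8$. Moreover $h^0(2K_V)=P(1)=9$ by the Hilbert polynomial and Kodaira vanishing (or directly from $h^0(\mathcal O_V(1))$ computed from the saturation). Hence $\mathcal O_V(1)$ is complete, and the embedding is bicanonical.
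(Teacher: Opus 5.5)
Your outline is sound. For smoothness it follows the paper's strategy: reduce modulo a prime, apply the Jacobian criterion over $\mathbb F_p$, and lift. The lift differs slightly. The paper adjoins $500$ random $6\times 6$ minors to $I_S$, finds the resulting ideal irrelevant modulo $97$, and uses that a full-rank coefficient matrix mod $97$ has full rank over $\mathbb Q$; this needs no flatness. Your lift via openness of smoothness for the flat family over $\mathrm{Spec}\,\ZZ_{(p)}$, with flatness certified by equal Hilbert polynomials, is also correct.

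Where you genuinely diverge is the canonical class and the invariants. The paper computes nothing about $K$ over $\mathbb Q$:
\begin{itemize}
\item it computes $h^0(\mathcal O)$, $p_g$, $q$ and the class $2K$ only on the reduction $\overline S$;
\item it transfers the vanishing of $h^1,h^2$ of $\mathcal O_S(\pm(2K_S-D))$ to $S$ by upper semicontinuity;
\item it then uses $D^2=32$, $D\cdot K_S=16$ (read off the Hilbert polynomial) and Riemann--Roch to get $h^0(2K_S-D)=K_S^2-7$ and $h^0(D-2K_S)=3K_S^2-23$.
\end{itemize}
This forces $K_S^2=8$ and $D\equiv 2K_S$ without knowing $K_S^2$ in advance.

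You instead read $\chi(\mathcal O_V)=P(0)=1$ directly from the Hilbert polynomial. You then prove $\omega_V^{\otimes 2}\cong\mathcal O_V(1)$ and $p_g=\dim\mathrm{Ext}^6(\mathbb Q[x]/I,\mathbb Q[x](-9))_0=0$ by exact computation over $\mathbb Q$, after which $K_V^2=H^2/4=8$ is immediate. Your route is conceptually more direct: no semicontinuity is needed in the cohomological part and no Riemann--Roch juggling. The cost is a full free resolution of a degree-$32$ surface in $\PP^8$ and an explicit module isomorphism over $\mathbb Q$, which is far heavier than the paper's computations over $\mathbb F_{97}$. The paper's route buys feasibility in exchange for the extra lifting argument.

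One step you leave unsupported is connectedness. You invoke it for absolute irreducibility, and your deduction of $q=0$ from $\chi=1$ tacitly uses $h^0(\mathcal O_V)=1$: with $k$ components you would only get $q=k-1$. A primary decomposition over $\mathbb Q$ gives only $\mathbb Q$-irreducibility, which does not exclude two disjoint Galois-conjugate components. The paper obtains $h^0(\mathcal O_S)=1$ from $h^0(\mathcal O_{\overline S})=1$ by semicontinuity. Within your own framework it also comes for free. Once $V$ is smooth with $\omega_V^{\otimes 2}\cong\mathcal O_V(1)$, each connected component is a minimal surface of general type and so has $\chi\ge 1$. Since $\chi(\mathcal O_V)=P(0)=1$, there is only one component. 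The fallback via the $G$-action is unnecessary, and as stated it would reintroduce the construction you set out to avoid.
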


\begin{remark}
Moreover, by construction each of these eight surfaces has an action of the group
$\mathbb Z/2\times\mathbb Z/4$ such that the corresponding quotient is a $\mathbb Z/2$-Godeaux surface
with singular set $2\mathsf A_1+2\mathsf A_3$.
These surfaces are not isogenous to a product of curves by the results in \cite{RitoExplicitFQ}.
\end{remark}

\begin{proof}
Each surface $S$ is defined over $\mathbb Q$, hence also over $\mathbb Z$. Let
$\overline S$ be its reduction modulo $p=97$.
Magma verifies that the Hilbert polynomials of $S$ and $\overline S$ coincide,
and hence that this family is flat.  We shall use the upper semicontinuity theorem:
the dimension of a cohomology group may jump up on reduction modulo
$p$, but cannot jump down.

Magma first verifies over $\mathbb Q$ that indeed $\dim S=2$.
It also computes $h^0(\overline S,\mathcal O_{\overline S})=1$.
By semicontinuity, $h^0(S,\mathcal O_S)=1.$ This implies that $S$ is connected.

We next check smoothness.  Let $I_S$ be the homogeneous ideal of $S$ and
let $J_S$ be the Jacobian matrix of a set of generators.  Since
$\dim S=2$ in $\mathbb P^8$, the relevant minors have size $6\times6$.
We choose $500$ random such minors, reduce them modulo $97$, and adjoin
them to the defining equations of $\overline S$.  For each of the eight
models, Magma verifies that the projective scheme over $\mathbb F_{97}$
defined by these equations is empty.

Let $I'_S\subset\mathbb Q[x_0,\ldots,x_8]$ be the homogeneous ideal
generated by $I_S$ and the chosen minors, and let $\overline{I'_S}$ denote
its reduction modulo $97$.  Since the projective scheme defined by
$\overline{I'_S}$ is empty, the Hilbert polynomial of
$\mathbb F_{97}[x_0,\ldots,x_8]/\overline{I'_S}$ is zero.  Hence
\[
 (\overline{I'_S})_d=\mathbb F_{97}[x_0,\ldots,x_8]_d
\]
for all sufficiently large $d$.  For any such $d$, the degree-$d$ part of
$\overline{I'_S}$ is spanned by the products of its generators with
monomials of the appropriate degrees.  Thus the preceding equality is
equivalent to a certain coefficient matrix having full rank.  After
clearing denominators, this matrix is the reduction modulo $97$ of the
corresponding matrix over $\mathbb Q$.  Since it has full rank modulo $97$,
it also has full rank over $\mathbb Q$: a maximal minor which is non-zero
modulo $97$ is non-zero over $\mathbb Q$.  Consequently,
\[
 (I'_S)_d=\mathbb Q[x_0,\ldots,x_8]_d.
\]
Thus the projective scheme defined by $I'_S$ is empty over $\mathbb Q$.

It follows that at every point of $S$ at least one of the chosen minors is
non-zero.  Thus the Jacobian has rank at least $6$, and hence the Zariski
tangent space has dimension at most $2$.  Every point on a two-dimensional
component is therefore regular.  Since $S$ is connected and has dimension
$2$, there can be no additional component: a disjoint component would
contradict connectedness, whereas a component meeting a two-dimensional
component would give a non-regular point.  Consequently $S$ is smooth and
irreducible, hence is a projective surface.

Magma now gives
$$
        p_g(\overline S)=q(\overline S)=0,
\qquad {\rm i.e.}\qquad 
h^1(\overline S,\mathcal O_{\overline S})=h^2(\overline S,\mathcal O_{\overline S})=0.
$$
Semicontinuity then yields
\[
        p_g(S)=q(S)=0,
        \qquad \chi(\mathcal O_S)=1.
\]

It remains to prove that the given embedding of $S$ is its bicanonical
embedding.  Let $D$ be a hyperplane section of $S$, and let
$\overline D$ be its reduction to $\overline S$.  For each model, Magma computes the divisor $2K_{\overline S}$ modulo
$97$ and verifies that
\[
        \overline D\equiv2K_{\overline S}.
\]
Since $q(\overline S)=p_g(\overline S)=0$, it follows that
\[
\begin{aligned}
 h^1\bigl(\overline S,
      \mathcal O_{\overline S}(2K_{\overline S}-\overline D)\bigr)
 &=h^1\bigl(\overline S,
      \mathcal O_{\overline S}(\overline D-2K_{\overline S})\bigr)
 =h^1(\overline S,\mathcal O_{\overline S})
 =q(\overline S)=0,\\
 h^2\bigl(\overline S,
      \mathcal O_{\overline S}(2K_{\overline S}-\overline D)\bigr)
 &=h^2\bigl(\overline S,
      \mathcal O_{\overline S}(\overline D-2K_{\overline S})\bigr)
 =h^2(\overline S,\mathcal O_{\overline S})
 =h^0(\overline S,K_{\overline S})
 =p_g(\overline S)=0.
\end{aligned}
\]
By upper semicontinuity, the corresponding cohomology groups over
$\mathbb Q$ also vanish:
\begin{equation}
\begin{aligned}
 h^1\bigl(S,\mathcal O_S(2K_S-D)\bigr)
 &=h^1\bigl(S,\mathcal O_S(D-2K_S)\bigr)=0,\\
 h^2\bigl(S,\mathcal O_S(2K_S-D)\bigr)
 &=h^2\bigl(S,\mathcal O_S(D-2K_S)\bigr)=0.
\end{aligned}
\label{eq:inverse-line-bundle-vanishings}
\end{equation}

The Hilbert polynomial of the given projective model is
\(
        16m^2-8m+1.
\)
Comparing this with Riemann--Roch for $mD$ gives
$D^2=32,$ $D\mathbin{\cdot}K_S=16$.
Using \eqref{eq:inverse-line-bundle-vanishings} and Riemann--Roch on $S$,
we obtain
\[
\begin{aligned}
 h^0\bigl(S,\mathcal O_S(2K_S-D)\bigr)
   &=\chi\bigl(\mathcal O_S(2K_S-D)\bigr)
     =K_S^2-7,\\
 h^0\bigl(S,\mathcal O_S(D-2K_S)\bigr)
   &=\chi\bigl(\mathcal O_S(D-2K_S)\bigr)
     =3K_S^2-23.
\end{aligned}
\]

If $K_S^2>8$, the two dimensions displayed above are both greater than
$1$, which is impossible.  If $K_S^2<8$, then the second displayed value is negative, which is also impossible.
Consequently
\(
        K_S^2=8
\).
The two dimensions are then both equal to $1$, thus
\[
        D\equiv2K_S
\]
over $\mathbb Q$.

Since $D$ is ample, the canonical divisor $K_S$ is ample.  Thus $S$ is a
minimal surface of general type with
\[
        K_S^2=8,
        \qquad
        p_g(S)=q(S)=0,
\]
and hence is a fake quadric.  Moreover, Kodaira vanishing and
Riemann--Roch give
\[
        h^0(S,2K_S)=\chi(\mathcal O_S)+K_S^2=9.
\]
The nine sections defining the given embedding therefore form the complete
bicanonical system, so the model in $\mathbb P^8$ is the bicanonical model of
$S$.
\end{proof}

\subsection{Rigidity}
\label{subsec:rigidity}

We next prove the rigidity of the eight surfaces by computing cohomology of the tangent bundle.

\begin{theorem}
The eight fake quadrics constructed above are rigid surfaces.
\end{theorem}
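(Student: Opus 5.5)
Rigidity means $H^1(S,T_S)=0$, where $T_S$ is the tangent sheaf. We will prove this by reducing modulo $p=97$ and invoking upper semicontinuity, in the same spirit as the proof of the previous theorem. The first step uses the flat family over $\operatorname{Spec}\mathbb Z_{(97)}$ already established there. For $S$ this family has smooth generic fiber, and Magma's Jacobian computation shows the special fiber $\overline S$ is smooth as well: the random $6\times 6$ minors together with the equations cut out the empty scheme over $\mathbb F_{97}$. So $S$ is smooth over $\mathbb Z_{(97)}$, and the relative tangent sheaf $T_{S/\mathbb Z_{(97)}}$ is locally free and flat over the base. Its fibers are $T_S$ and $T_{\overline S}$, so upper semicontinuity gives
\[
h^1(S,T_S)\le h^1(\overline S,T_{\overline S}).
\]
It therefore suffices to show that $H^1(\overline S,T_{\overline S})=0$ over $\mathbb F_{97}$.

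Next we compute $T_{\overline S}$ explicitly as a sheaf on $\overline S\subset\PP^8$. The tangent sheaf is the dual of $\Omega^1_{\overline S}$, which sits in the conormal sequence
\[
I/I^2\longrightarrow \Omega^1_{\PP^8}|_{\overline S}\longrightarrow \Omega^1_{\overline S}\longrightarrow 0 .
\]
Dually, $T_{\overline S}$ is the kernel of $T_{\PP^8}|_{\overline S}\to N_{\overline S/\PP^8}$. Magma can present $\Omega^1_{\overline S}$ as a graded module: it is the cokernel of the Jacobian matrix acting on the Euler-sequence presentation of $\Omega^1_{\PP^8}$. From this presentation Magma computes the dual sheaf and its sheaf cohomology, using Ext modules or a direct cohomology routine for coherent sheaves on projective schemes.

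As a sanity check and a useful reduction, we use $\chi(T_S)=2K_S^2-10\chi(\mathcal O_S)=6$ by Hirzebruch--Riemann--Roch. We also have $h^0(T_S)=0$, since $S$ is of general type. Hence $h^1(T_S)=0$ is equivalent to $h^2(T_S)=h^0(\Omega^1_S\otimes K_S)=6$. This gives an alternative route, which may be computationally easier: compute $h^0(\overline S,\Omega^1_{\overline S}(K_{\overline S}))$ over $\mathbb F_{97}$. Upper semicontinuity only bounds this from above, so we would pair it with the fact that in characteristic $97$ Riemann--Roch still gives $\chi(T_{\overline S})=6$ (using $K^2=8$, $\chi(\mathcal O)=1$). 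We also need $h^0(T_{\overline S})=0$. This holds because a nonzero vector field would, via the bicanonical embedding, give infinitesimal automorphisms preserving $I$; alternatively we simply compute $H^0(\overline S,T_{\overline S})=0$ directly. Then $h^1(T_{\overline S})=h^2(T_{\overline S})-6$, and showing $h^0(\Omega^1_{\overline S}(K_{\overline S}))\le 6$ forces $h^1(T_{\overline S})=0$.

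The main obstacle is the computational one: computing sheaf cohomology of $T_{\overline S}$ or of $\Omega^1_{\overline S}(K_{\overline S})$ for a surface in $\PP^8$ defined by many quadrics. Direct Ext computations may be too expensive. I would first try twisting by $K_{\overline S}$, where $2K_{\overline S}=\mathcal O(1)$. The idea is to reduce to computing $H^0$ of a graded module in a single degree, possibly after passing to the module $\Omega^1$ twisted by $\mathcal O(1)$ and using Serre duality, $h^1(T)=h^1(\Omega^1(K))$. If that is infeasible, the fallback is to compute $H^0(\Omega^1_{\overline S}(m))$ for large $m$ to pin down the Hilbert function, and to extract $h^1$ from the local cohomology of the presenting module.
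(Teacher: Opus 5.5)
Your reduction is sound and uses the paper's framework: reduction modulo $97$, upper semicontinuity, $\chi(T_S)=6$ and Serre duality. The difference is in which group you bound in characteristic $97$.

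The paper never computes $h^0$ or $h^1$ of $T_{\overline S}$. It uses the rank-two identity $\Omega^1_S\cong T_S\otimes K_S$ to write $h^2(T_S)=h^0(\Omega^1_S\otimes K_S)=h^0(T_S\otimes 2K_S)=h^0(T_S(1))$. It then checks $h^0(T_{\overline S}(1))=6$ in Magma and transfers only this upper bound to characteristic $0$. It finishes with $h^0(T_S)=0$ and $\chi(T_S)=6$ over $\mathbb C$, where both facts are automatic.

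That identity is exactly the cheap single-twist $H^0$ computation you were looking for in your last paragraph. Your suggestion of twisting $\Omega^1$ by $\mathcal O(1)$ does not give it, because that produces $\Omega^1\otimes 2K$ rather than $\Omega^1\otimes K$.

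Your version asks more of the prime $97$. Since $h^1-h^0=h^2-6$, your vanishing $h^1(T_{\overline S})=0$ implies the paper's bound $h^2(T_{\overline S})\le 6$. The converse fails, because that bound still allows $h^0(T_{\overline S})=h^1(T_{\overline S})>0$. So the paper's test succeeds at every prime where yours does.

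Likewise, your second route needs $h^0(T_{\overline S})=0$ as a separate input. Your remark about infinitesimal automorphisms preserving the ideal does not supply it, since surfaces of general type can carry vector fields in positive characteristic. It must actually be computed, as your fallback says.

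Your write-up is more careful than the paper at the semicontinuity step. You note that the same Jacobian computation makes the special fibre smooth, so the relative tangent sheaf is locally free and commutes with base change. The paper uses this only tacitly.
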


\begin{proof}
Let $T_S$ be the tangent bundle of one of the eight fake quadrics $S$.  To prove that \(S\) is rigid,
it suffices to show that
\(
        H^1(S,T_S)=0.
\)
Riemann--Roch formula gives
\[
 \chi(S,T_S)=2K_S^2-10\chi(\mathcal O_S)=16-10=6.
\]
Since a surface of general type has no non-zero holomorphic vector fields,
\(h^0(S,T_S)=0\), and therefore
\[
        h^1(S,T_S)=h^2(S,T_S)-6.
\]
By Serre duality and the rank-two identity
\(\Omega_S^1\cong T_S\otimes K_S\),
\[
\begin{aligned}
 h^2(S,T_S)
   &=h^0(S,\Omega_S^1\otimes K_S)\\
   &=h^0(S,T_S\otimes2K_S)
    =h^0(S,T_S(1)),
\end{aligned}
\]
where \(\mathcal O_S(1)=\mathcal O_S(2K_S)\) for the bicanonical model.

For each of the eight models, we compute the corresponding quantity
after reduction modulo $97$.  More precisely, Magma gives
\(h^0\bigl(\overline S,T_{\overline S}(1)\bigr)=6\).
By upper semicontinuity,
\[
        h^0(S,T_S(1))
        \leq
        h^0\bigl(\overline S,T_{\overline S}(1)\bigr)
        =6.
\]
Thus $h^2(S,T_S)\leq6$, and the preceding Euler-characteristic identity
forces
\[
        h^2(S,T_S)=6,
        \qquad
        h^1(S,T_S)=0.
\]
Hence all eight fake quadrics are rigid.
\end{proof}

\subsection{Distinguishing the surfaces}
\label{subsec:distinguishing-surfaces}

Finally, we distinguish the eight surfaces by considering the pencil of quadrics contained in their bicanonical ideals.

\begin{theorem}
The eight fake quadrics given above are pairwise non-isomorphic.
\end{theorem}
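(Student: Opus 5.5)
The plan is to extract from each bicanonical ideal a projective invariant that is unchanged under isomorphism, and to show that it takes eight different values. If $S\cong S'$, the isomorphism respects $2K$. By the preceding theorem both models are complete bicanonical embeddings in $\PP^8$, so the isomorphism is induced by a projective linear transformation $\PP^8\to\PP^8$ carrying $I_S$ onto $I_{S'}$. The degree-$2$ graded piece $(I_S)_2$ is therefore an intrinsic invariant up to $\mathrm{PGL}_9$. The first step is to check that $\dim (I_S)_2=2$, so that $(I_S)_2$ is a pencil of quadrics. This is consistent with $h^0(\mathcal O_{\PP^8}(2))=45$ and $h^0(S,4K_S)=\chi(\mathcal O_S)+6K_S^2=49$. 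That the pencil is exactly two-dimensional requires the Magma computation of $(I_S)_2$.

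A pencil $\lambda Q_1+\mu Q_2$ of quadrics in nine variables has a projective invariant: the binary form $\det(\lambda Q_1+\mu Q_2)$ of degree $9$, taken up to $\mathrm{PGL}_2$ acting on $(\lambda:\mu)$ and up to scaling. Its root configuration, or the Segre symbol if the form is degenerate, distinguishes non-equivalent pencils. For each of the eight models I will compute this determinant over $\mathbb Q$, factor it, and record invariants that do not depend on the choice of basis of the pencil. Useful invariants are the multiset of degrees of the irreducible factors over $\mathbb Q$, the multiplicities of roots, and the coranks of the singular members. Where these coarse data coincide, I will use finer $\mathrm{PGL}_2$-invariants of the binary nonic. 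Examples are the number fields generated by its roots, or cross-ratios and $j$-type invariants of the root configuration over $\overline{\mathbb Q}$.

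One subtlety is that isomorphism should be tested over $\mathbb C$, not just over $\mathbb Q$. Rational factorization patterns are not invariants over $\mathbb C$, since a complex isomorphism need not be defined over $\mathbb Q$. So I will rely only on geometric invariants: root multiplicities, coranks of the singular quadrics, and the isomorphism class over $\overline{\mathbb Q}$ of the finite set of roots on $\PP^1$. A convenient choice is the splitting field of the binary form together with absolute $\mathrm{PGL}_2$-invariants. If $\Delta$ is a product of distinct linear forms, the $\mathrm{PGL}_2$-orbit of nine points on $\PP^1$ is determined by rational absolute invariants, which can be compared directly.

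The main obstacle is that the determinantal form might be highly degenerate. For example, it could vanish identically if every quadric in the pencil is singular, which is plausible given the large automorphism group $G$ acting on the pencil. In that case I would fall back on other intrinsic data. Options are the Segre symbol of the pencil, the rank stratification of its members, or the singular loci and vertices of the singular members intersected with $S$. Failing those, I would use point counts of $\overline S$ over $\mathbb F_p$ for several good primes, taking twists into account, as a supplementary distinguishing tool.
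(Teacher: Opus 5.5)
Your plan is essentially the paper's proof, which also takes the pencil $I_2(S_j)$ (exact computation gives $\dim I_2(S_j)=2$; the paper also explains equivariantly why at least one quadric of character $(0,0)$ and one of character $(0,2)$ must exist, even though $45<49$), forms $\Delta_j=\det(sA_{j,0}+tA_{j,1})$, and finds that $\Delta_j\equiv0$ for exactly one surface, so that surface is separated by this fact alone and your degenerate-case fallbacks are never needed; for equivalence it does not compute $\mathrm{PGL}_2$-invariants but directly checks that the scheme of $(a,b,c,d,n)$ with $\Delta_j(as+bt,cs+dt)=\Delta_k(s,t)$ and $1+n(ad-bc)=0$ is empty over $\overline{\mathbb Q}$ for all $28$ pairs. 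One correction: the splitting field, and the fields generated by the roots, are not invariant under $\mathrm{PGL}_2(\overline{\mathbb Q})$ (e.g.\ $s^2-2t^2$ and $s^2-3t^2$ are equivalent), so only the geometric data you list (root multiplicities, coranks, absolute invariants) may legitimately be used.
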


\begin{proof}
For \(j=1,\ldots,8\), put
\[
        V_j=H^0(S_j,2K_{S_j}).
\]
By Riemann--Roch and Kodaira vanishing, \(\dim V_j=9\). The group
\(G\cong\mathbb Z/2\times\mathbb Z/4\) acts naturally on \(V_j\). For
every non-trivial element of order two, the periodicity in the
holomorphic Lefschetz formula gives trace $1$ on \(V_j\). An element of
order four fixes two points of \(S_j\). Each fixed point contributes
\[
        \frac{1}{(1-i)(1+i)}=\frac12,
\]
so its trace on \(V_j\) is again one. Hence the character of \(V_j\) is
the sum of the trivial and the regular characters of \(G\). In particular,
\(V_j\) has an eigenbasis with
\(\mathbb Z/2\times\mathbb Z/4\)-grading
\begin{equation}
 (0,0)\oplus(0,0)\oplus(0,1)\oplus(0,2)\oplus(0,3)
 \oplus(1,0)\oplus(1,1)\oplus(1,2)\oplus(1,3)
 \label{eq:bicanonical-grading}
\end{equation}
which we have also computed explicitly as the consequence of our construction of the covering surface.

Similarly, \(h^0(S_j,4K_{S_j})=49\), and the trace of every non-trivial
element of \(G\) on \(H^0(S_j,4K_{S_j})\) is one. Thus the invariant
eigenspace has dimension seven and each of the other seven eigenspaces
has dimension six. On the other hand,
\eqref{eq:bicanonical-grading} gives
\[
 \dim\!\left(\operatorname{Sym}^2V_j\right)_{(0,0)}=8,
 \qquad
 \dim\!\left(\operatorname{Sym}^2V_j\right)_{(0,2)}=7.
\]
Consequently, the kernel
\[
 I_2(S_j)=\ker\!\left(
   \operatorname{Sym}^2H^0(S_j,2K_{S_j})
   \longrightarrow H^0(S_j,4K_{S_j})
 \right)
\]
contains at least one quadric of character \((0,0)\) and one of
character \((0,2)\). For each \(j=1,\ldots,8\), the exact computation
gives
\(
        \dim I_2(S_j)=2.
\)

Choose a basis \(Q_{j,0},Q_{j,1}\) of \(I_2(S_j)\), and let
\(A_{j,0},A_{j,1}\) be the corresponding symmetric matrices. The
singular members of the pencil are parametrized by the vanishing of
\[
        \Delta_j(s,t)
        :=\det\bigl(sA_{j,0}+tA_{j,1}\bigr).
\]
If \(\Delta_j\) is non-zero, it is a binary form of degree \(9\), and
its zero divisor is the degeneracy divisor of the pencil. For one of
the eight surfaces, \(\Delta_j\) vanishes identically. In this case
every quadric in the pencil is singular and the degeneracy locus is the
whole of \(\mathbb P^1\).

An isomorphism between two of the surfaces would induce an isomorphism
between their bicanonical spaces, and hence a projective equivalence
between the corresponding degeneracy loci. Thus, for some
\[
        M=
        \begin{pmatrix}
        a&b\\ c&d
        \end{pmatrix}
        \in\operatorname{GL}_2(\overline{\mathbb Q}),
\]
the determinant forms corresponding to \(S_j\) and \(S_k\) would
satisfy
\[
        \Delta_j(as+bt,cs+dt)=\Delta_k(s,t),
\]
together with
\[
        1+n(ad-bc)=0
        \quad\text{for some }n,
\]
which imposes the condition \(\det(M)\neq0\).

For each of the \(28\) pairs \(1\leq j<k\leq8\), we formed the
corresponding scheme in the five-dimensional affine space with
coordinates \(a,b,c,d,n\). Exact computations in Magma show that all
these schemes are empty. Hence the eight degeneracy loci are pairwise
non-equivalent over \(\overline{\mathbb Q}\), and consequently the eight
fake quadrics are pairwise non-isomorphic.
\end{proof}

{\small
\bibliographystyle{amsplain}
\bibliography{fake_quadrics_references}
}

\vspace{1cm}

\noindent Lev Borisov \vspace{0.1cm} 
\\Department of Mathematics, Rutgers University
\\Piscataway, NJ 08854
\\ \verb|borisov@math.rutgers.edu| 

\vspace{1cm}

\noindent Carlos Rito
\vspace{0.1cm}
\\ Centro de Matem\'atica, Universidade do Minho - Polo CMAT-UTAD
\vspace{0.1cm}
\\ Universidade de Tr\'as-os-Montes e Alto Douro, UTAD
\\ Quinta de Prados
\\ 5000-801 Vila Real, Portugal
\vspace{0.1cm}
\\ www.utad.pt, {\tt crito@utad.pt}

\end{document}